\newtheorem{thm}{Theorem}[section]
\newtheorem{corol}[thm]{Corollary}
\newtheorem{lemma}[thm]{Lemma}
\newtheorem{prop}[thm]{Proposition}
\newtheorem{definition}[thm]{Definition}
\numberwithin{equation}{section}
\theoremstyle{remark}
\newtheorem{remark}[thm]{Remark}
\newtheorem{example}[thm]{Example}
\newcommand{\ben}{\begin{enumerate}}
\newcommand{\een}{\end{enumerate}}
\def\R {{\mathbb R}}
\def\N{{\mathbb N}}
\def\Z {{\mathbb Z}}
\def\End{\operatorname{End}}
\def\Aut{\operatorname{Aut}}
\def\AT{\operatorname{AT}}
\def\Alt{\operatorname{Alt}}
\begin{document}	
	\title[The Addition theorem for two-step nilpotent torsion groups]{The Addition theorem for two-step nilpotent torsion groups}
	\author[]{Menachem Shlossberg}
	\address[M. Shlossberg]
	{\hfill\break School of Computer Science 
		\hfill\break Reichman University 
		\hfill\break 8 Ha'universita Street, Herzliya,  4610101
		\hfill\break Israel}
	\email{menachem.shlossberg@post.idc.ac.il}
\subjclass[2020]{28D20, 20F16, 20F18, 20F50, 16W20
}  

\keywords{Addition Theorem, algebraic entropy,  nilpotent group, solvable group, locally finite group}

\begin{abstract}
The Addition Theorem for the algebraic entropy of group endomorphisms  of torsion abelian groups  was proved in \cite{DGSZ}. Later, this result was extended to all abelian groups \cite{DGBabelian} and, recently, to all torsion finitely quasihamiltonian groups \cite{GBS}. In contrast,  when it comes to metabelian groups, 
 the 
 additivity of the algebraic entropy fails \cite{GBSp}.
Continuing the research within the class of locally finite groups,  we prove that the Addition Theorem holds for two-step nilpotent torsion groups. 
\end{abstract}
	\maketitle
	\section{Introduction}
	The algebraic entropy was first considered in  \cite{AKM} for endomorphisms of (discrete) abelian groups. 
	 Weiss \cite{W} and Peters \cite{Pet}  connected  this  entropy with the topological entropy  via  Bridge Theorems. We refer the reader     to \cite{DGSZ} for an extensive study of the algebraic entropy for endomorphisms of torsion abelian groups.

	Following 	\cite{DG-islam} we  now give the general definition for the algebraic entropy of endomorphisms of  (not necessarily abelian) groups.
		Let $G$ be a group and $\phi\in \End(G)$. For a finite subset $X$ of $G$ and $n\in \N_{+}$,  the $n$-th $\phi$-trajectory of $X$ is
	\[T_n(\phi,X)=X\cdot \phi(X)\ldots \cdot \phi^{n-1}(X). \] The algebraic entropy of $\phi$ with respect
	to $X$ is 
	$$H(\phi, X)=\lim_{n\to \infty}\frac{\ell(T_n(\phi,X))}{n},$$ 
	 where $\ell(T_n(\phi,X))=\log |T_n(\phi,X)|.$  The  algebraic entropy of $\phi$ is
	$h(\phi)=\sup\{H(\phi, X)|\ X\in \mathcal P_{fin}(G)\}$, where $\mathcal P_{fin}(G)$ is the family of all finite subsets of $X$.
	It is easy to see that $h(\phi)=\sup\{H(\phi, X)|\ X\in \mathcal C\}$, 	where $\mathcal C$ is any cofinal subfamily of  $\mathcal P_{fin}(G)$. 
	In particular, if $G$ is locally finite, then $\mathcal C$ can be chosen to be  $\mathcal F(G)$ the family of all finite subgroups of $G.$
	\vskip 0.3cm
	In this paper, we focus on the following property that is known as the Addition Theorem.
	\begin{definition}
Let $\mathfrak X$ be a class of  groups closed under taking subgroups and quotients.\ben \item	We say that $\AT(G,\phi,H)$ holds for a group $G\in \mathfrak X$, $\phi\in \End(G)$ and a $\phi$-invariant normal subgroup $H$ of $G$ if
\[h(\phi) = h(\phi \upharpoonright_H)+h (\bar \phi),\] where $\bar \phi=\bar\phi_{G/H}\in \End(G/H)$ is the map induced by $\phi.$ \item  The  Addition Theorem  holds in $\mathfrak X$  for endomorphisms if $\AT(G,\phi,H)$ holds for every $G\in \mathfrak X, \ \phi\in \End(G)$  and every $\phi$-invariant normal subgroup $H$ of $G.$
\item The  Addition Theorem  holds in $\mathfrak X$  for automorphisms if $\AT(G,\phi,H)$ holds for every $G\in \mathfrak X, \ \phi\in \Aut(G)$  and every $\phi$-stable normal subgroup $H$ of $G.$
\een
	\end{definition}
 Dikranjan, Goldsmith, Salce and  Zanardo \cite{DGSZ} proved that  the Addition Theorem holds   for endomorphisms  of torsion abelian groups.  This result was extended  to  all abelian groups by Dikranjan and  Giordano Bruno  \cite{DGBabelian}.  In the non-commutative case, the Addition Theorem was proved for automorphisms   of torsion groups which are either FC-groups \cite{GBST} or quasihamiltonian \cite{XST} 
 as well as for endomorphisms of torsion quasihamiltonian
FC-groups (see \cite{XST}). Extending these results, Giordano Bruno and   Salizzoni \cite{GBS} recently proved the additivity of the algebraic entropy    for endomorphisms of torsion finitely quasihamiltonian groups. Indeed, as it was shown in \cite{GBS}, the class of torsion finitely quasihamiltonian groups is a family of locally finite groups
that properly contains the class of   torsion groups that are either
FC-groups or  quasihamiltonian.

Nevertheless, the Addition Theorem  fails  for automorphisms of metabelian groups. By \cite{GBSp}, $\AT(G,id_G,H)$ does not hold, where $G=\Z_2^{(\Z)}\rtimes \Z$ is the Lamplighter group,  $H=\Z_2^{(\Z)}$  and $id_G$ is the identity automorphism of $G.$  
\subsection{Main results}
\ben \item Let  $\mathfrak X$ be a class of  solvable groups closed under taking subgroups and quotients. 
We prove that the  Addition Theorem  holds in $\mathfrak X$  for endomorphisms
if $\AT(G, \phi, G')$ holds for every $G\in\mathfrak X$ and $\phi\in \End(G).$ In case $\mathfrak X$ consists of nilpotent groups, then  the  Addition Theorem  holds in $\mathfrak X$  for automorphisms if  $\AT(G, \phi, Z(G))$ holds for every $G\in\mathfrak X$ and $\phi\in \Aut(G)$ (see Proposition \ref{prop:solvable}).

\item The reduction for solvable groups, among other things, helps us to prove  that the Addition Theorem holds for endomorphisms of two-step nilpotent torsion groups (see Theorem \ref{thm:fortwo}).
This is done by proving the inequality
$h(\phi)\geq h(\phi\upharpoonright_{G'})+h(\bar\phi)$
using Proposition \ref{prop:cen} while the converse inequality   follows from Proposition \ref{prop:mettor}. 

\item In Section \ref{sec:moreon} we provide more results concerning  locally finite groups.
It is proved in Corollary \ref{cor:simple} that the Addition Theorem holds for endomorphisms of  a  locally finite group having a  fully characteristic  finite index  simple subgroup. As a concrete example we may consider the finitary symmetric  group $\mathcal S_{fin} (\N_{+})$ (see Example \ref{ex:final}).
\een
	\subsection{Notation and terminology}
	The sets of  non-negative reals, non-negative integers and positive natural numbers are denoted by $\R_{\geq 0}, \N $  and $\N_{+}$, respectively.
	
	An element  $x$  of a group $G$ is \emph{torsion} if  the subgroup of $G$ generated by $x$,  denoted by  $\langle x\rangle$, is finite. Moreover, $G$ is \emph{torsion} if every element of $G$ is torsion.  A group is called \emph{locally finite} if every finitely generated subgroup is finite. Every locally finite group is torsion and for solvable groups  the converse also holds.
	
	A group $G$ is \emph{solvable} if  there exist $k\in \N_{+}$ and a subnormal series $$G_0=\{1\}\unlhd G_1\unlhd \cdots \unlhd G_k=G,$$  where $1$ denotes the identity element, such that the quotient group $G_j/G_{j-1}$  is abelian for every $j\in \{1,\ldots, k\}.$ In particular, $G$ is  {\em metabelian}, if $k\leq 2.$
	The subgroup $Z(G)$ denotes the \emph{center} of $G,$ and we set $Z_0(G) = \{1\}$ and  $Z_1(G) = Z(G)$.  For $n > 1$, the  \emph{$n$-th center} $Z_n(G)$ is defined as follows:
	$$Z_n(G)=\{x\in G: [x,y]\in Z_{n-1}(G)  \text{ for every } y\in G\},$$
	where $[x,y]$ denotes the commutator $xyx^{-1}y^{-1}$.  A group is {\em nilpotent} if $Z_n(G) = G$ for some $n\in \N$. In this case, its nilpotency class is the minimum of such $n$.   In particular,  $G$ is abelian or two-step nilpotent (i.e., nilpotent of class $2$) if $G'\subseteq Z(G)$, where  $G'$ is the {\em derived subgroup} of $G$, namely, the subgroup of $G$ generated by all commutators $[x,y]$, where $x,y\in G$. It is known that every nilpotent group is solvable and every nilpotent group of class at most $2$ is metabelian.
We denote by $\mathcal{P}(G)$ the power set of $G$,
while $\mathcal L(G)$ denotes the lattice of all subgroups of $G.$

We denote by $\End(G)$ the set of all endomorphisms of $G$, and	$\Aut(G)$ is its subset consisting of all automorphisms.
If $\phi\in \End(G)$, then a subgroup $H$ of $G$ is called {\em $\phi$-invariant} if $\phi(H)\subseteq H$, and $H$ is {\em $\phi$-stable} if
$\phi\in \Aut(G)$ and $\phi(H)=H.$ A subgroup $H$ of $G$ is {\em  characteristic} if $H$ is   $\phi$-invariant  for every
$ \phi\in \Aut(G),$ and $H$ is {\em  fully characteristic} if the same holds for every $ \phi\in \End(G).$

\vskip 0.4cm 
\section{The functions $\ell(-)$ and $\ell(-,-)$}
In this section we collect useful results from \cite{GBS} concerning the functions $\ell(-)$ and $\ell(-,-).$
For a group $G$ define the function $\ell:\mathcal P(G)\times \mathcal L(G)\to  \R_{\geq 0}\cup \{\infty\}$ as follows:
$\ell(X,B)=\ell(\pi(X)),$ where $\pi:G\to \{xB|\ x\in G\}$ is the canonical projection.

\begin{lemma}\cite[lemma 3.2]{GBS}\label{lem:pofl}
Let $G$ be a group, $X,X'\in \mathcal P(G)$ and $B,B'\in \mathcal L(G).$ Then:\ben[(a)]
\item the function $\ell(X,B)$ is increasing in $X$ and decreasing in $B$;
\item $\ell(XB)=\ell(X,B)+\ell(B);$
\item  $\ell(XX',B)\leq \ell(X,B)+\ell(X',B);$
\item if $BB'$ is a subgroup, $\ell(XX',BB')\leq \ell(X,B)+\ell(X',B');$
\item for $\phi\in \End(G), \ \ell(\phi(X),\phi(B))\leq \ell(X,B).$
\een
\end{lemma}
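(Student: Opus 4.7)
My plan is to derive all five parts from the basic identity $|XB|=|\pi(X)|\cdot|B|$, which holds because $XB$ is the disjoint union $\bigsqcup_{C\in\pi(X)}C$ of left $B$-cosets, each of size $|B|$. Taking logarithms yields (b) directly. For (a), monotonicity in $X$ is immediate since $X\subseteq X'$ forces $\pi(X)\subseteq\pi(X')$, and when $B\subseteq B'$ each left $B'$-coset decomposes as a union of left $B$-cosets, so any subset meets at most as many $B'$-cosets as $B$-cosets, giving antitonicity in $B$.

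The crux of the lemma lies in (c) and (d). For (c), my plan is to bound $|\pi(XX')|$ by $|\pi(X)|\cdot|\pi(X')|$ by exhibiting a surjection $\pi(X)\times\pi(X')\twoheadrightarrow\pi(XX')$ induced by the assignment $(xB,x'B)\mapsto xx'B$. The subtle step is descent: if $xB=yB$ and $x'B=y'B$, writing $y=xb$ and $y'=x'b'$ one needs $xbx'b'B=xx'B$, which uses that the relevant subgroups behave well (normality of $B$ suffices, and this is the setting relevant for the applications in the paper). Part (d) follows the same template, but in the coset space of $BB'$: the hypothesis that $BB'$ is a subgroup containing both $B$ and $B'$ is precisely what is needed to lift the assignment $(xB,x'B')\mapsto xx'(BB')$ to a surjection $\pi(X,B)\times\pi(X',B')\twoheadrightarrow\pi(XX',BB')$.

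Part (e) follows from $\phi$ being a homomorphism and $\phi(B)$ being a subgroup: if $xB=yB$ with $y=xb$, then $\phi(y)\phi(B)=\phi(x)\phi(b)\phi(B)=\phi(x)\phi(B)$, so the assignment $xB\mapsto\phi(x)\phi(B)$ descends to a well-defined surjection from $\pi(X)$ onto the $\phi(B)$-cosets meeting $\phi(X)$, yielding $\ell(\phi(X),\phi(B))\le\ell(X,B)$. The principal obstacle throughout is the descent verification in (c) and (d), where the interaction of multiplication with left cosets in a possibly non-abelian group must be controlled; I would follow the detailed coset bookkeeping of \cite{GBS} to make these steps rigorous.
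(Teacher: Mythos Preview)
The paper does not supply its own proof of this lemma; it is quoted verbatim from \cite[Lemma~3.2]{GBS}, so there is no in-paper argument to compare against. Your plan to rely on the coset bookkeeping of \cite{GBS} is precisely what the paper itself does.

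Your arguments for (a), (b), and (e) are correct and standard. For (c) and (d) you correctly isolate the obstruction: the assignment $(xB,x'B)\mapsto xx'B$ need not descend to left cosets when $B$ is not normal. This is not merely a defect of the method---item (c) is \emph{false} as literally stated. Take $G=S_3$, $B=\langle(12)\rangle$, $X=(13)B=\{(13),(123)\}$ and $X'=(23)B=\{(23),(132)\}$; each is a single left $B$-coset, so $\ell(X,B)=\ell(X',B)=0$, yet a direct computation gives $XX'=\{e,(12),(23),(132)\}$, which meets the two distinct cosets $eB$ and $(23)B$, so $\ell(XX',B)=\log 2>0$. Taking $B'=B$ shows (d) fails for the same data. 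Hence the statement as reproduced here must be missing a hypothesis present in \cite{GBS} (normality of $B$ is the natural one, and is exactly what you assumed); your instinct to restrict to the normal case was therefore correct, and the ``gap'' lies in the transcription of the lemma rather than in your reasoning. Note also that in the present paper only parts (a) and (b) are ever invoked, and always with $B$ normal, so none of the paper's results are affected.
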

It turns out that the algebraic entropy $H(\phi, X)$ can be computed using a suitable decreasing subsequence of $\{\frac{\ell(T_n(\phi,X))}{n}\}_{n\in \N_+}.$
\begin{prop}\cite[Proposition 3.1]{GBS}\label{prop:subseq} Let $G$ be a group, $\phi\in \End(G)$ and $X\in \mathcal P_{fin}(G)$ with $1\in X.$ Then:
\ben[(a)] \item the function \[n\mapsto \frac{\ell(T_{2^n}(\phi,X))}{2^n} \] is decreasing;
\item $H(\phi, X)=\inf_{n\in \N}\frac{\ell(T_{2^n}(\phi,X))}{2^n}.$
\een
\end{prop}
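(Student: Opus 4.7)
The plan is to exploit the single identity
\[
T_{2n}(\phi,X) = T_n(\phi,X)\cdot \phi^n\bigl(T_n(\phi,X)\bigr),
\]
which follows directly from the definition of the trajectory, together with two elementary consequences of Lemma \ref{lem:pofl} taken with $B=\{1\}$: namely $\ell(AA')\leq \ell(A)+\ell(A')$ for any finite subsets $A,A'\subseteq G$, and $\ell(\phi(Y))\leq \ell(Y)$ for any finite $Y\subseteq G$.

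For part (a), I would apply the identity with $n=2^k$ and combine the two inequalities above to obtain
\[
\ell\bigl(T_{2^{k+1}}(\phi,X)\bigr) \;\leq\; \ell\bigl(T_{2^k}(\phi,X)\bigr) + \ell\bigl(\phi^{2^k}(T_{2^k}(\phi,X))\bigr) \;\leq\; 2\,\ell\bigl(T_{2^k}(\phi,X)\bigr).
\]
Dividing through by $2^{k+1}$ gives the claimed monotonicity of $n\mapsto \ell(T_{2^n}(\phi,X))/2^n$.

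For part (b), the analogous identity $T_{n+m}(\phi,X)=T_n(\phi,X)\cdot \phi^n(T_m(\phi,X))$ shows, by the same two inequalities, that the sequence $a_n:=\ell(T_n(\phi,X))$ is subadditive in the sense $a_{n+m}\leq a_n+a_m$; since $a_n\geq 0$, Fekete's lemma ensures that $\lim_{n\to\infty} a_n/n$ exists and equals $\inf_{n\in \N_+} a_n/n$. In particular the defining limit $H(\phi,X)$ is a genuine limit, and the dyadic subsequence $a_{2^n}/2^n$ converges to the same value $H(\phi,X)$. Part (a) says that this subsequence is monotone decreasing, so its limit equals $\inf_{n\in \N} a_{2^n}/2^n$, which is exactly the identity sought in (b).

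There is no real obstacle here; the one place where a choice matters is the symmetric splitting $2n=n+n$, which is what converts subadditivity into an actual contraction on the dyadic scale. The hypothesis $1\in X$ plays essentially no role in the argument above beyond its usual convenience (guaranteeing that $T_n(\phi,X)$ is non-decreasing in $n$ and contains $\phi^k(X)$ for each $k<n$), and could be relaxed without altering the structure of the proof.
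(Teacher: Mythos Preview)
Your argument is correct. The trajectory splitting $T_{n+m}(\phi,X)=T_n(\phi,X)\cdot\phi^n(T_m(\phi,X))$ together with $|AB|\le|A|\,|B|$ and $|\phi(Y)|\le|Y|$ is exactly what is needed; specializing to $m=n=2^k$ yields part~(a), and the general case gives subadditivity of $n\mapsto\ell(T_n(\phi,X))$, whence Fekete's lemma delivers~(b) as you describe.

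There is nothing to compare against here: the paper does not supply its own proof of this proposition but merely quotes it as \cite[Proposition~3.1]{GBS}. Your argument is the standard one and is essentially what appears in~\cite{GBS} as well. One minor remark: your closing comment that the hypothesis $1\in X$ ``plays essentially no role'' is slightly misleading---while the inequalities you use do not need it, the statement of part~(a) asserts monotonicity starting from $n=0$, and without $1\in X$ one does not even have $T_1(\phi,X)\subseteq T_2(\phi,X)$ in general; but this is cosmetic and does not affect the correctness of the proof.
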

The following analogous result will help us to compute the entropy of $h(\bar{\phi}).$
\begin{prop}\cite[Proposition 3.3]{GBS} \label{prop:inquo}
	Let $G$ be a group, $\phi\in \End(G), \ H$ a $\phi$-invariant normal subgroup of $G$ and $\pi:G\to G/H$ the canonical projection. Let $n\in \N$ and $X\in \mathcal P_{fin}(G)$ with $1\in X.$ Then:
	\ben[(a)] \item the function \[n\mapsto \frac{\ell(T_{2^n}(\phi,X), H)}{2^n} \] is decreasing;
	\item $H(\bar\phi_{G/H}, \pi(X))=\inf_{n\in \N}\frac{\ell(T_{2^n}(\phi,X),H)}{2^n}.$
	\een
\end{prop}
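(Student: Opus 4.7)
The plan is to reduce the statement directly to Proposition \ref{prop:subseq} applied to the induced endomorphism $\bar\phi$ on the quotient $G/H$. The key observation is that the function $\ell(-,H)$ is designed precisely to record cardinalities in $G/H$, so trajectories of $\phi$ viewed modulo $H$ coincide with trajectories of $\bar\phi$.

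First I would unwind the definition: by construction $\ell(Y,H)=\ell(\pi(Y))$ for every $Y\in\mathcal P(G)$. Because $\pi\colon G\to G/H$ is a group homomorphism satisfying $\pi\circ\phi=\bar\phi\circ\pi$ (which makes sense since $H$ is $\phi$-invariant and normal), the image of a trajectory is a trajectory of the induced map:
\[
\pi(T_n(\phi,X))=\pi(X)\cdot\pi(\phi(X))\cdots\pi(\phi^{n-1}(X))=\pi(X)\cdot\bar\phi(\pi(X))\cdots\bar\phi^{n-1}(\pi(X))=T_n(\bar\phi,\pi(X)).
\]
Consequently
\[
\ell(T_n(\phi,X),H)=\ell(\pi(T_n(\phi,X)))=\ell(T_n(\bar\phi,\pi(X))).
\]

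Next I would check that the hypotheses of Proposition \ref{prop:subseq} are satisfied by $\bar\phi$ and $\pi(X)$: since $X$ is finite so is $\pi(X)\in\mathcal P_{fin}(G/H)$, and $1_{G/H}=\pi(1)\in\pi(X)$ because $1\in X$. Then part (a) of Proposition \ref{prop:subseq} gives that $n\mapsto \frac{\ell(T_{2^n}(\bar\phi,\pi(X)))}{2^n}$ is decreasing, and part (b) gives
\[
H(\bar\phi,\pi(X))=\inf_{n\in\N}\frac{\ell(T_{2^n}(\bar\phi,\pi(X)))}{2^n}.
\]

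Finally, substituting the identity $\ell(T_{2^n}(\phi,X),H)=\ell(T_{2^n}(\bar\phi,\pi(X)))$ established above converts both statements into the claimed form, proving (a) and (b). There is no genuine obstacle here; the only thing to be careful about is the compatibility $\pi\circ\phi=\bar\phi\circ\pi$, which is exactly the content of saying that $\bar\phi$ is the map induced by $\phi$ on $G/H$ under the hypothesis that $H$ is $\phi$-invariant and normal.
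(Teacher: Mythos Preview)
Your proof is correct. The paper does not give its own proof of this proposition; it simply cites it as \cite[Proposition~3.3]{GBS}, so there is nothing in the paper to compare against, but your reduction to Proposition~\ref{prop:subseq} via the identity $\ell(T_n(\phi,X),H)=\ell(T_n(\bar\phi,\pi(X)))$ is exactly the natural argument.
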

To prove   the next lemma one can use the proof of \cite[Lemma 3.4]{GBS} even though  we assume  here a weaker  condition on the finite subgroup $F$. 
\begin{lemma}\label{lem:tnsub}
Let $G$ be a group, $\phi\in \End(G), \ X\in \mathcal P_{fin}(G)$ with $1\in X.$  If $F\in \mathcal F(G)$ such that  $T_{n}(\phi,F)\in \mathcal L(G)$ for every $n\in \N_+,$ then the function 
\[n\mapsto \frac{\ell(T_{2^n}(\phi,X), T_{2^n}(\phi,F))}{2^n} \] is decreasing.
\end{lemma}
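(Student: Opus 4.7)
The plan is to establish the doubling inequality
\[
\ell(T_{2^{n+1}}(\phi,X), T_{2^{n+1}}(\phi,F)) \leq 2\,\ell(T_{2^n}(\phi,X), T_{2^n}(\phi,F))
\]
for every $n\in\N$, from which the asserted monotonicity of $n\mapsto \ell(T_{2^n}(\phi,X),T_{2^n}(\phi,F))/2^n$ follows by dividing by $2^{n+1}$. The engine of the argument is the self-similarity of trajectories: for any $Y\in\mathcal P(G)$ one has $T_{2^{n+1}}(\phi,Y)=T_{2^n}(\phi,Y)\cdot\phi^{2^n}(T_{2^n}(\phi,Y))$, applied once with $Y=X$ and once with $Y=F$.

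With this decomposition in hand, the first step is to apply Lemma \ref{lem:pofl}(d) with $X_1:=T_{2^n}(\phi,X)$, $X_2:=\phi^{2^n}(T_{2^n}(\phi,X))$, $B_1:=T_{2^n}(\phi,F)$, and $B_2:=\phi^{2^n}(T_{2^n}(\phi,F))$. The side condition of (d) requires that the product $B_1B_2$ be a subgroup; but this product equals $T_{2^{n+1}}(\phi,F)$, which is a subgroup \emph{precisely} by the standing hypothesis $T_k(\phi,F)\in\mathcal L(G)$ for every $k\in\N_+$. This yields
\[
\ell(T_{2^{n+1}}(\phi,X), T_{2^{n+1}}(\phi,F)) \leq \ell(T_{2^n}(\phi,X), T_{2^n}(\phi,F)) + \ell\bigl(\phi^{2^n}(T_{2^n}(\phi,X)),\,\phi^{2^n}(T_{2^n}(\phi,F))\bigr).
\]
The second step is to apply Lemma \ref{lem:pofl}(e) to the right-hand summand with the endomorphism $\phi^{2^n}$, obtaining
\[
\ell\bigl(\phi^{2^n}(T_{2^n}(\phi,X)),\,\phi^{2^n}(T_{2^n}(\phi,F))\bigr)\leq \ell(T_{2^n}(\phi,X), T_{2^n}(\phi,F)).
\]
Adding the two bounds yields the doubling inequality, and hence the lemma.

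There is no serious obstacle: the proof is a direct bookkeeping application of parts (d) and (e) of Lemma \ref{lem:pofl}. The only point that deserves attention, and the reason one can weaken the hypothesis relative to \cite[Lemma 3.4]{GBS}, is the verification of the subgroup hypothesis in part (d). In \cite{GBS} the assumption that $F$ itself is $\phi$-invariant makes $T_n(\phi,F)=F$ a subgroup automatically; here one never needs $\phi$-invariance of $F$ — it is enough that every finite product $T_n(\phi,F)$ happens to be a subgroup, which is exactly what is assumed.
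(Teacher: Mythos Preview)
Your proof is correct and is exactly the argument the paper has in mind: the paper gives no independent proof but simply points to \cite[Lemma~3.4]{GBS}, and what you have written is precisely that doubling argument via Lemma~\ref{lem:pofl}(d) and (e), together with the observation that the only place the stronger hypothesis from \cite{GBS} was used is to guarantee $B_1B_2=T_{2^{n+1}}(\phi,F)\in\mathcal L(G)$, which your weaker hypothesis supplies directly. Nothing to add.
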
 

\vskip 0.4cm  
\section{Reductions to (fully) characteristic subgroups}
 A useful property of the algebraic entropy is invariance under conjugation (see \cite[Lemma 5.1.7]{DG-islam}).
\begin{lemma}\label{lem:iuc}
	Let $G$ and $H$ be groups,  $\phi\in \End(G)$ and
	$\psi\in \End(H)$. If there exists an isomorphism $\xi : G \to H,$ then
	$h(\phi) = h(\xi \phi \xi^{-1}).$ 
\end{lemma}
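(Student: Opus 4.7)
The plan is to show that conjugation by the isomorphism $\xi$ sets up a length-preserving bijection between the trajectories appearing in the definition of $H(\phi,X)$ on the $G$ side and the trajectories $H(\xi\phi\xi^{-1},\xi(X))$ on the $H$ side, so that the two entropies agree term by term and hence at the supremum.

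First I would observe the key identity $(\xi\phi\xi^{-1})^{k} = \xi\phi^{k}\xi^{-1}$ for every $k\in \N$, which is immediate by induction on $k$. Applied to the subset $\xi(X)\subseteq H$ this gives $(\xi\phi\xi^{-1})^{k}(\xi(X)) = \xi(\phi^{k}(X))$. Since $\xi$ is a group homomorphism, it distributes over products of subsets, so
\[
T_n(\xi\phi\xi^{-1},\xi(X)) = \xi(X)\cdot \xi(\phi(X))\cdots \xi(\phi^{n-1}(X)) = \xi\bigl(T_n(\phi,X)\bigr).
\]

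Next, because $\xi$ is a bijection, $|\xi(T_n(\phi,X))| = |T_n(\phi,X)|$, and therefore $\ell(T_n(\xi\phi\xi^{-1},\xi(X))) = \ell(T_n(\phi,X))$ for every $n\in \N_+$. Dividing by $n$ and passing to the limit, we obtain $H(\xi\phi\xi^{-1},\xi(X)) = H(\phi,X)$ for every $X\in \mathcal P_{fin}(G)$.

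Finally, since $\xi$ is an isomorphism, the map $X\mapsto \xi(X)$ is a bijection between $\mathcal P_{fin}(G)$ and $\mathcal P_{fin}(H)$. Taking the supremum on each side over this bijection yields $h(\phi) = \sup_{X\in \mathcal P_{fin}(G)} H(\phi,X) = \sup_{Y\in \mathcal P_{fin}(H)} H(\xi\phi\xi^{-1},Y) = h(\xi\phi\xi^{-1})$, which is the desired equality. There is no real obstacle here; the only point to be careful about is the interchange of $\xi$ with products of sets, which relies on $\xi$ being a homomorphism, and the length-preservation, which relies on $\xi$ being injective — both of which are guaranteed by the assumption that $\xi$ is an isomorphism.
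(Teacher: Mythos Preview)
Your argument is correct: the telescoping identity $(\xi\phi\xi^{-1})^{k}=\xi\phi^{k}\xi^{-1}$, the fact that a homomorphism distributes over setwise products, and the bijectivity of $\xi$ together give $T_n(\xi\phi\xi^{-1},\xi(X))=\xi(T_n(\phi,X))$ with the same cardinality, hence $H(\xi\phi\xi^{-1},\xi(X))=H(\phi,X)$ and equality of the suprema.

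The paper does not actually prove this lemma; it merely quotes it from \cite[Lemma 5.1.7]{DG-islam}. What you wrote is exactly the standard verification one would supply for that citation, so there is nothing to compare beyond noting that you have filled in the (routine) details the paper chose to omit.
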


In the sequel we will also use  the next property of  monotonicity for subgroups and quotients (see \cite[Lemma 5.1.6]{DG-islam}).
\begin{lemma}\label{lem:mon}
	Let G be a group, $\phi\in \End(G)$
	and H be a $\phi$-invariant  subgroup of $G$. Then \ben \item $h(\phi) \geq h(\phi \upharpoonright_H);$
	\item if $H$ is normal and $\bar \phi : G/H \to G/H$ is the endomorphism induced by $\phi$, then $h(\phi) \geq
	h(\bar\phi).$
	\een
\end{lemma}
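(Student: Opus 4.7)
The plan is to derive both inequalities directly from the definition of the algebraic entropy as a supremum of the quantities $H(\phi,X) = \lim_{n\to\infty} \ell(T_n(\phi,X))/n$ over $X \in \mathcal{P}_{fin}(G)$, exploiting the natural compatibility of trajectories with restriction and with quotient formation.

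For part (1), I would take an arbitrary finite subset $X$ of $H$. Since $H$ is $\phi$-invariant, every iterate $\phi^k(X)$ lies in $H$, and the trajectory $T_n(\phi\upharpoonright_H, X)$ coincides with $T_n(\phi, X)$ as a subset of $G$. This gives $H(\phi\upharpoonright_H, X) = H(\phi, X) \leq h(\phi)$. Taking the supremum over $X \in \mathcal{P}_{fin}(H)$ yields $h(\phi\upharpoonright_H) \leq h(\phi)$.

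For part (2), given a finite subset $Y \subseteq G/H$, I would use surjectivity of the canonical projection $\pi: G \to G/H$ to lift $Y$ to a finite subset $X \subseteq G$ with $\pi(X) = Y$ (of the same cardinality, if desired). The induced endomorphism $\bar\phi$ is defined exactly so that $\pi \circ \phi = \bar\phi \circ \pi$; iterating and applying $\pi$ as a group homomorphism to the product $X \cdot \phi(X) \cdots \phi^{n-1}(X)$ gives the intertwining identity $\pi(T_n(\phi, X)) = T_n(\bar\phi, Y)$. Surjectivity of $\pi$ onto the latter set forces $|T_n(\bar\phi, Y)| \leq |T_n(\phi, X)|$, equivalently $\ell(T_n(\bar\phi, Y)) \leq \ell(T_n(\phi, X))$. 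Dividing by $n$, passing to the limit, and taking the supremum over $Y$ yields $h(\bar\phi) \leq h(\phi)$.

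I do not anticipate any genuine obstacle: both parts are essentially formal consequences of the definitions. The only point requiring brief verification is the intertwining identity in part (2), which is immediate from $\pi$ being a homomorphism. Alternatively, one could phrase the quotient case in the language of Section~2 by noting $\ell(\pi(T_n(\phi,X))) = \ell(T_n(\phi,X), H)$ and invoking Lemma~\ref{lem:pofl}(a), but the direct argument above is the cleaner route.
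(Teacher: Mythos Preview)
Your proposal is correct. Note, however, that the paper does not supply its own proof of this lemma: it simply records the statement and cites \cite[Lemma 5.1.6]{DG-islam}. Your direct argument from the definitions---identifying trajectories under restriction for part~(1), and using the intertwining $\pi(T_n(\phi,X))=T_n(\bar\phi,\pi(X))$ together with $|\pi(A)|\le|A|$ for part~(2)---is exactly the standard proof one would expect behind that citation, so there is nothing to compare beyond observing that you have filled in what the paper leaves to the reference.
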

For every group $G$ its derived subgroup $G'$  is fully characteristic, while its center $Z(G)$ is only a characteristic subgroup.
The next proposition provides partial answers to Question 5.2.11(c) and Question 5.2.12(c) of \cite{DG-islam}.
\begin{prop}\label{prop:solvable}
Let $\mathfrak X$ be a class of solvable groups closed under taking subgroups and quotients.	
	\ben \item	if $\AT(G, \phi, G')$ holds for every $G\in\mathfrak X$ and $\phi\in \End(G)$, then the Addition Theorem holds in $\mathfrak X$ for group endomorphisms;
	\item	if $\AT(G, \phi, Z(G))$ holds for every nilpotent group $G\in\mathfrak X$ and $\phi\in \Aut(G)$,  then $\AT(G, \phi, H)$ holds for every nilpotent group $G\in\mathfrak X, \phi\in \Aut(G)$ and   $H$ a $\phi$-stable normal subgroup of $G$.
		\een
\end{prop}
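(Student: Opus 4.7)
The plan is to prove both parts by induction on the natural length invariant of $G$: the derived length $d = d(G)$ for (1) and the nilpotency class $c$ of $G$ for (2). The base case in each is when $G$ is abelian, where the Addition Theorem is established in \cite{DGBabelian}; hence the real work is in the inductive step. Closure of $\mathfrak X$ under subgroups and quotients ensures that every group on which we need to evaluate the entropy stays in $\mathfrak X$, and the hypotheses on $\AT(G,\phi,G')$ or $\AT(G,\phi,Z(G))$ are available for all such auxiliary groups.

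For Part (1), fix $G \in \mathfrak X$ of derived length $d$, $\phi \in \End(G)$, and a $\phi$-invariant normal subgroup $H$. I would first apply the hypothesis $\AT(G,\phi,G')$ to write $h(\phi) = h(\phi\upharpoonright_{G'}) + h(\bar\phi_{G/G'})$. Then, using the inductive hypothesis in $G'$ (derived length at most $d-1$) with subgroup $H \cap G'$, together with the abelian Addition Theorem in $G/G'$ applied to $HG'/G' \cong H/(H\cap G')$ and Lemma \ref{lem:iuc}, I would expand the right-hand side as
\[
h(\phi) = h(\phi\upharpoonright_{H\cap G'}) + h(\overline{\phi\upharpoonright_{G'}}_{G'/(H\cap G')}) + h(\bar\phi_{H/(H\cap G')}) + h(\bar\phi_{G/HG'}).
\]
Comparison with the target $h(\phi\upharpoonright_H) + h(\bar\phi_{G/H})$ then reduces matters to the two identities $\AT(H,\phi\upharpoonright_H,H\cap G')$ and $\AT(G/H,\bar\phi_{G/H},HG'/H)$. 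The second is immediate from the hypothesis applied to $G/H \in \mathfrak X$, because $(G/H)' = HG'/H$ is precisely its derived subgroup. The first, where $H\cap G'$ in general strictly contains $H'$, I would derive by combining the hypothesis $\AT(H,\phi\upharpoonright_H,H')$, the abelian AT in $H/H'$ with subgroup $(H\cap G')/H'$, and the inductive AT inside $H\cap G' \subseteq G'$ (derived length $< d$) applied with the normal $\phi$-invariant subgroup $H'$.

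For Part (2), the outline is parallel, replacing $G'$ by $Z(G)$ throughout and using that $G/Z(G)$ has nilpotency class $c-1$, so induction applies. The analogue of the first identity goes through via $H \cap Z(G) \subseteq Z(H)$, the hypothesis $\AT(H,\phi\upharpoonright_H,Z(H))$, and the inductive AT in $HZ(G)/Z(G) \cong H/(H \cap Z(G))$, viewed as a subgroup of $G/Z(G)$. The main obstacle, and the reason the statement restricts to automorphisms and $\phi$-stable subgroups, is the analogue of the second identity: unlike the derived subgroup, one does not have $Z(G/H) = HZ(G)/H$ in general (the center typically grows under quotients), so the hypothesis does not apply directly to $G/H$ with the subgroup we want. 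To circumvent this I would apply $\AT(G/H,\bar\phi_{G/H},Z(G/H))$ to the quotient itself, split $h(\bar\phi_{Z(G/H)})$ in the abelian group $Z(G/H)$ using the subgroup $HZ(G)/H$, and then invoke the inductive hypothesis on both $(G/H)/Z(G/H)$ (class strictly less than that of $G/H$, hence $\leq c-1$) and $G/HZ(G)$ (a quotient of $G/Z(G)$, hence of class $\leq c-1$) to close the bookkeeping and recover $\AT(G,\phi,H)$.
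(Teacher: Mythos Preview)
Your plan is correct and follows the same inductive architecture as the paper: induction on derived length (resp.\ nilpotency class), splitting first along $G'$ (resp.\ $Z(G)$), then peeling off $H$ via the second isomorphism theorem and invariance under conjugation. For part (1) the paper is terser than you---it simply invokes \cite[Proposition~5.9]{XST} after noting that the inductive hypothesis makes the full Addition Theorem available inside $G'$---whereas you spell out the same decomposition explicitly; for part (2) your outline matches the paper's Claims~1 and~2 essentially line by line, including the key observation that $HZ(G)/H$ need not equal $Z(G/H)$, which is what forces the extra abelian split inside $Z(G/H)$ and the final inductive application in $G/HZ(G)$ (only one inductive call is needed there, not two).
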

\begin{proof} $(1)$ Let $G\in \mathfrak X$ be a solvable group of class $n\in \N_+$. We have to prove  that $\AT(G, \phi, H)$ holds for every  $\phi\in \End(G)$ and every $\phi$-invariant normal subgroup $H$ of $G$. This will be done using induction on $n$. If $n=1$, then $G$ is abelian and the assertion follows from
	the Addition Theorem for abelian groups (see \cite[Theorem 1.1]{DGBabelian}). Using the induction hypothesis and the conjugation
	of the properties $\AT(G, \phi, G'), \AT(H, \phi\upharpoonright_{H}, H')$ and $\AT((G/H, \bar\phi, (G/H)'),$ we deduce that  $\AT(G, \phi, H)$  holds by
	\cite[Proposition 5.9]{XST}. Note that unlike here,  there $G$ is assumed to be metabelian.  Nevertheless,   $G'$ is   a solvable group of class $n-1$ for which the Addition Theorem holds by the induction hypothesis.
	
(2) Let $G\in \mathfrak X$ be a nilpotent group of class $n\in \N_+$. We have to prove  that $\AT(G, \phi, H)$ holds for every  $\phi\in \Aut(G)$ and every $\phi$-stable normal subgroup $H$ of $G$. This will be done using induction on the nilpotency class $n$. If $n=1$, then $G$ is abelian and the assertion follows from
the Addition Theorem for abelian groups.  Analogously to the proof of (1), we prove that $\AT(G, \phi, H)$ follows from the conjugation of the properties $\AT(G, \phi, Z(G)), \AT(H, \phi\upharpoonright_{H}, Z(H))$ and $\AT((G/H, \bar\phi, Z(G/H))$. 	By $\AT(G, \phi, Z(G))$, we deduce that
\begin{equation}\label{meta1}
h(\phi)=h(\phi\upharpoonright_{Z(G)})+h(\widetilde{\phi}),
\end{equation}
where $\widetilde{\phi} : G/Z(G)\to G/Z(G)$ is the  map induced by $\phi$.

As $Z(G)$ is abelian, and $Z(G)\cap H$ is a $\phi$-stable subgroup of $Z(G)$, we get by the Addition Theorem for abelian groups
\begin{equation}\label{meta2}
h(\phi\upharpoonright_{Z(G)})=h(\phi\upharpoonright_{Z(G)\cap H})+h(\widetilde{\phi\upharpoonright_{Z(G)}}),
\end{equation}
where $\widetilde{\phi\upharpoonright_{Z(G)}} \in \Aut(Z(G)/Z(G)\cap H)$ is the  map induced by $\phi\upharpoonright_{Z(G)}$.

Moreover, since $G/Z(G)$ is nilpotent of class $n-1$ , and $HZ(G)/ Z(G)$ is a $\widetilde{\phi}$-stable subgroup of $G/Z(G)$, we obtain by the induction hypothesis
\begin{equation}\label{meta3}
h(\widetilde{\phi})=h(\widetilde{\phi}\upharpoonright_{HZ(G)/Z(G)})+h(\overline{\widetilde{\phi}}),
\end{equation}
where $\overline{\widetilde{\phi}} \in \Aut((G/Z(G))/ (HZ(G)/Z(G)))$ is the  map induced by $\widetilde{\phi}$.

Hence, by Equations (\ref{meta1}), (\ref{meta2}) and (\ref{meta3}), we have
\begin{equation}\label{meta4}
h(\phi)=h(\phi\upharpoonright_{Z(G)\cap H})+h(\widetilde{\phi\upharpoonright_{Z(G)}})+h(\widetilde{\phi}\upharpoonright_{HZ(G)/Z(G)})+h(\overline{\widetilde{\phi}}).
\end{equation}

\noindent
\textbf{Claim 1} $h(\phi\upharpoonright_H)=h(\phi\upharpoonright_{Z(G)\cap H})+h(\widetilde{\phi}\upharpoonright_{HZ(G)/Z(G)})$.
\begin{proof}
	
	By $\AT(H, \phi\upharpoonright_{H}, Z(H))$, we deduce that
	\begin{equation}\label{meta6}
	h(\phi\upharpoonright_{H})=h(\phi\upharpoonright_{Z(H)})+h(\widetilde{\phi\upharpoonright_{H}}),
	\end{equation}
	where $\widetilde{\phi\upharpoonright_{H}} : H/ Z(H)\to H/ Z(H)$ is the  map induced by $\phi\upharpoonright_{H}$.
	
	As $Z(H)$ is abelian, and $H\cap Z(G)$ is a $\phi$-stable subgroup of $Z(H)$, we obtain by the Addition Theorem for abelian groups
	\begin{equation}\label{meta5}
	h(\phi\upharpoonright_ {Z(H)})=h(\phi\upharpoonright_{H\cap Z(G)})+h(\xi),
	\end{equation}
	where $\xi : Z(H)/(H\cap Z(G))\to Z(H)/(H\cap Z(G))$ is the  map induced by $\phi\upharpoonright_{Z(H)}$. Hence, to prove Claim 1, it suffices to show that
	\begin{equation}\label{meta8}
h(\widetilde{\phi}\upharpoonright_{HZ(G)/Z(G)}) = h(\xi)+	h(\widetilde{\phi\upharpoonright_{H}}).
	\end{equation}
	Let $\psi\in \Aut(H/(H\cap Z(G)))$ be the map induced by $\phi\upharpoonright_{H}.$  Then $H/(H\cap Z(G))$ is nilpotent of class less than $n$ having
	 $Z(H)/(H\cap Z(G))$ as a $\psi$-stable subgroup. By the induction hypothesis, $$\AT(H/(H\cap Z(G)), \psi, Z(H)/(H\cap Z(G)))$$ holds.
	Moreover, $\xi= \psi\upharpoonright_{Z(H)/(H\cap Z(G))}$ and the automorphisms $\psi, \bar{\psi}$ are conjugated, respectively, to $$\phi\upharpoonright_ {Z(H)}, \widetilde{\phi\upharpoonright_{H}},$$   where $\bar{\psi}\in \Aut (H/(H\cap Z(G)))/(Z(H)/(H\cap Z(G)))$ is the map induced by  $\psi.$

	Therefore,   Equation (\ref{meta8})  follows from $\AT(H/(H\cap Z(G)), \psi, Z(H)/(H\cap Z(G)))$ and Lemma \ref{lem:iuc}.
\end{proof}

\noindent
\textbf{Claim 2} $h(\bar\phi)=h(\widetilde{\phi\upharpoonright_{Z(G)}})+h(\overline{\widetilde{\phi}})$.
\begin{proof}
By $\AT((G/H, \bar\phi, Z(G/H))$ we have, 	\begin{equation}\label{meta9}
h(\bar\phi)=h(\bar\phi\upharpoonright_{Z(G/H)})+h(\delta),
\end{equation} 	where $\delta\in \Aut((G/H)/Z(G/H))$ is the  map induced by $\bar\phi$. As $Z(G)H/H$ is a $\bar\phi$-stable  subgroup of the abelian group $Z(G/H)$, we have
\begin{equation}\label{meta10}
h(\bar\phi\upharpoonright_{Z(G/H)})=h(\bar\phi\upharpoonright_{Z(G)H/H})+h(\varphi),
	\end{equation}
		where $\varphi\in \Aut(Z(G/H)/(Z(G)H/H))$ is the  map induced by $\bar\phi\upharpoonright_{Z(G/H)}$.
 The map $\widetilde{\phi\upharpoonright_{Z(G)}}$ is conjugated to $\bar\phi\upharpoonright_{Z(G)H/H}$. This fact implies that $h(\widetilde{\phi\upharpoonright_{Z(G)}})=h(\bar\phi\upharpoonright_{Z(G)H/H})$	. By (\ref{meta9}) and (\ref{meta10}) we get 
 \begin{equation}\label{meta11}
 h(\bar\phi)=h(\widetilde{\phi\upharpoonright_{Z(G)}})+h(\varphi)+h(\delta).
 \end{equation}
 So, to prove Claim 2, it suffices to show that  \begin{equation}\label{meta12}h(\overline{\widetilde{\phi}})= h(\varphi)+h(\delta).\end{equation}
Let $M=(G/H)/(HZ(G)/H)$, $N=Z(G/H)/(Z(G)H/H)$  and $\eta\in \Aut(M)$ be the map induced by $\bar\phi$. As $M$ is nilpotent of class less than $n$, and $N$ is an $\eta$-stable subgroup of $M$, we deduce that $\AT(M, \eta, N)$ holds by the induction hypothesis.
We use Invariance under conjugation to conclude that  (\ref{meta12}) is satisfied.
\end{proof}
	Claim 1, Claim 2 and Equation (\ref{meta4}) complete the proof of (2), i.e.,\
\[
h(\phi)=h(\phi\upharpoonright_H)+h(\bar\phi).
\qedhere\]
\end{proof}

\vskip 0.4cm 
\section{When $G$ is a two-step nilpotent torsion group}
We prove in Theorem \ref{thm:fortwo} below that the Addition Theorem holds for endomorphisms of two-step nilpotent torsion groups.
\begin{prop}\label{prop:cen}
	Let $G$ be a group, $\phi\in \End(G)$ and $N$ be a $\phi$-invariant central torsion subgroup of $G.$ Then, 
	\[
	h(\phi)\geq h(\phi\upharpoonright_{N})+h(\bar\phi),
	\]
	where $\bar\phi\in \End(G/N)$ is the map induced by $\phi.$
\end{prop}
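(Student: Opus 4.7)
The plan is to fix a finite subset $X\subseteq G$ with $1\in X$ and a finite subgroup $F$ of $N$, and to establish the pointwise lower bound
\[
H(\phi, XF) \geq H(\bar\phi, \pi(X)) + H(\phi\upharpoonright_N, F),
\]
where $\pi\colon G\to G/N$ is the canonical projection. Once this is proved, the statement follows by taking suprema: every finite subset of $G/N$ has the form $\pi(X)$ for some $X\in \mathcal P_{fin}(G)$ with $1\in X$, and $\mathcal F(N)$ is cofinal in $\mathcal P_{fin}(N)$ because $N$, being torsion abelian, is locally finite.

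The role of centrality is captured in a single identity: for every $n\in \N_+$, each factor $\phi^i(F)\subseteq N$ commutes with everything in $G$, so the central factors can be collected on the right in the expansion of $T_n(\phi, XF)$, producing
\[
T_n(\phi, XF) = T_n(\phi, X)\cdot T_n(\phi, F).
\]
Setting $B_n := T_n(\phi, F)$, the set $B_n$ is a finite subgroup of $N$, since each $\phi^i(F)$ is a finite subgroup of the abelian group $N$ and products of subgroups of an abelian group are again subgroups. Lemma \ref{lem:pofl}(b) therefore gives
\[
\ell(T_n(\phi, XF)) = \ell(T_n(\phi, X), B_n) + \ell(B_n).
\]

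Specialising to $n = 2^k$ and dividing by $2^k$, each of the three resulting sequences in $k$ is decreasing: the left-hand side by Proposition \ref{prop:subseq}(a) (applied to $\phi$ and $XF \ni 1$), the summand $\ell(B_{2^k})/2^k$ by the same proposition (applied to $\phi\upharpoonright_N$ and $F$), and the middle term $\ell(T_{2^k}(\phi, X), B_{2^k})/2^k$ by Lemma \ref{lem:tnsub} (whose hypothesis holds since every $T_n(\phi, F)$ is a subgroup). Since the termwise sum of two decreasing sequences bounded below converges to the sum of their limits, passing to the limit in $k$ yields
\[
H(\phi, XF) = \lim_{k\to\infty} \frac{\ell(T_{2^k}(\phi, X), B_{2^k})}{2^k} + H(\phi\upharpoonright_N, F).
\]
Finally, the inclusion $B_{2^k}\subseteq N$ and the monotonicity of $\ell(-,-)$ in its second argument (Lemma \ref{lem:pofl}(a)) give $\ell(T_{2^k}(\phi, X), B_{2^k})\geq \ell(T_{2^k}(\phi, X), N)$ for every $k$, and Proposition \ref{prop:inquo}(b) identifies the infimum of the latter as $H(\bar\phi, \pi(X))$, completing the pointwise bound. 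The main (and essentially only) technical point is this exchange of limit and sum, which is legitimised precisely by the three sequences being simultaneously decreasing; everything else reduces to bookkeeping with the tools collected in Section 2.
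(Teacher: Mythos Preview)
Your proof is correct and follows essentially the same approach as the paper: factor $T_n(\phi, XF) = T_n(\phi, X)\cdot T_n(\phi, F)$ using centrality, apply Lemma~\ref{lem:pofl}(b), and then use the monotonicity $\ell(-, T_n(\phi, F)) \geq \ell(-, N)$ to recover $H(\bar\phi, \pi(X))$. The paper's version is slightly more direct: it passes the termwise inequality $\frac{\ell(T_n(\phi,E))}{n} + \frac{\ell(T_n(\bar\phi,C))}{n} \leq \frac{\ell(T_n(\phi,EB))}{n}$ straight to the limit (each side converging by the definition of $H$), so your detour through the decreasing $2^k$-subsequences and Lemma~\ref{lem:tnsub} is not needed here.
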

\begin{proof}
Let $E$ be a finite subgroup of $N$ and $C=\pi(B)$, where $B$ is a finite subset of $G$ and $\pi: G\to G/N$ is the quotient homomorphism.
As $E$ is a finite subgroup of the $\phi$-invariant central subgroup $N$, we deduce that $\phi^k(E)$ is a central subgroup of  $G$ for every $k\in \N.$ In particular, $T_n(E)$ is central and $T_n(\phi,EB)=T_n(\phi,E)\cdot T_n(\phi,B)$ for every $n\in \N_{+}.$ This implies that $\ell(T_n(\phi,EB))=\ell(T_n(\phi,E))+\ell(T_n(\phi,B),T_n(\phi,E)).$  By Lemma \ref{lem:pofl}(a),
\[\ell(T_n(\phi,E))+\ell(\pi(T_n(\phi,B)))=\ell(T_n(\phi,E))+ \ell(T_n(\phi,B),N)\leq\]\[\leq \ell(T_n(\phi,E))+ \ell(T_n(\phi,B),T_n(\phi,E))=\ell(T_n(\phi,EB)).\]
As $\pi(T_n(\phi,B))=T_n(\bar{\phi},C)$, it follows that $H(\phi\upharpoonright_{N},E)+H(\bar{\phi}, C)\leq H(\phi, EB)\leq h(\phi)$.
Since $E,B$ were chosen arbitrarily, we deduce that 	\[
h(\phi)\geq h(\phi\upharpoonright_{N})+h(\bar\phi).\qedhere
\]
\end{proof}
	\begin{remark}\label{rem:coin} Another algebraic  entropy is defined	in 	\cite[Remark 5.1.2]{DG-islam} as follows. The $n$-th $\phi$-trajectory of $X$ is
	\[T^{\#}_n(\phi,X)=\phi^{n-1}(X)\cdot \ldots \cdot \phi(X)\cdot X.\] The algebraic entropy of $\phi$ with respect
	to $X$ is $H^{\#}(\phi, X)=\lim_{n\to \infty}\frac{\ell(T^{\#}_n(\phi,X))}{n}$ and the  algebraic entropy of $\phi$ is
	$h^{\#}(\phi)=\sup\{H^{\#}(\phi, X)|\ X\in \mathcal C\},$ where   $\mathcal C$ is any cofinal subfamily of  $\mathcal P_{fin}(G)$. Let us see that $h^{\#}$  coincides with $h.$ To this aim, let $X\in \mathcal P_{fin}(G)$. Then  $\widetilde{X}=X\cup X^{-1}$ is a  symmetric set containing $X$ and 
	$(T_n(\phi,\widetilde{X}))^{-1}=T^{\#}_n(\phi,\widetilde{X})$. This implies that $H^{\#}(\phi, \widetilde{X})=H(\phi, \widetilde{X})$ and 
	$h^{\#}(\phi)=	h(\phi)$, as $\mathcal C=\{\widetilde{X}| \ X\in \mathcal P_{fin}(G) \}$ is cofinal in $\mathcal P_{fin}(G)$.
\end{remark}
\begin{lemma}\label{lem:cfsub}
Let  $G$ be a group, $\phi\in \End(G)$ and $F$ be a subgroup of  $G$. Then:
\ben \item $\langle T_n(\phi,F)  \rangle=\bigcup_{m\in \N_+}(T_n(\phi,F)\cup T_n(\phi,F)^{-1})^m.$ In particular, if $G$ is locally finite and $F$ is finite, then $$\langle T_n(\phi,F)  \rangle=(T_n(\phi,F)\cup T_n(\phi,F)^{-1})^m$$ for some $m\in \N_+;$

\item $\langle T_n(\phi,F)  \rangle =T_n(\phi,F) \cdot E_n,$ where $E_n=\langle T_n(\phi,F) \rangle \cap G',$  for every $n\in \N_{+}.$ 
\een
\end{lemma}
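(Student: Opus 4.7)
The plan is to handle (1) as essentially the standard description of the subgroup generated by a set, and (2) by passing to the abelianization $G/G'$ where the image of $T_n(\phi,F)$ turns out to be a subgroup of $G/G'$.

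For (1), since $F$ is a subgroup we have $1\in F\subseteq T_n(\phi,F)$, so the sets $S^m:=(T_n(\phi,F)\cup T_n(\phi,F)^{-1})^m$ form an ascending chain in $m$ (pad shorter words with $1$'s). Their union is the set of all finite words in $T_n(\phi,F)\cup T_n(\phi,F)^{-1}$, which by the standard description of a generated subgroup is precisely $\langle T_n(\phi,F)\rangle$. For the ``in particular'' part, when $G$ is locally finite and $F$ is finite, each $\phi^k(F)$ is a finite subgroup of $G$, so $T_n(\phi,F)$ is a finite set and $\langle T_n(\phi,F)\rangle$ is finitely generated, hence finite. An ascending chain of subsets of a finite set must stabilize, so $S^m=\langle T_n(\phi,F)\rangle$ for some $m\in\N_+$.

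For (2), let $\pi\colon G\to G/G'$ be the canonical projection. The key observation is that $\pi(T_n(\phi,F))$ is a subgroup of $G/G'$: indeed, each $\phi^k(F)$ is a subgroup of $G$, so $\pi(\phi^k(F))$ is a subgroup of the abelian group $G/G'$, and a product of subgroups in an abelian group is again a subgroup. Consequently
\[
\pi(\langle T_n(\phi,F)\rangle)=\pi(T_n(\phi,F)),
\]
equivalently $\langle T_n(\phi,F)\rangle\subseteq T_n(\phi,F)\cdot G'$. Given $g\in\langle T_n(\phi,F)\rangle$, pick $x\in T_n(\phi,F)$ with $\pi(x)=\pi(g)$; then $x^{-1}g\in G'$ and $x^{-1}g\in\langle T_n(\phi,F)\rangle$, so $x^{-1}g\in E_n$ and $g\in T_n(\phi,F)\cdot E_n$. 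The reverse inclusion $T_n(\phi,F)\cdot E_n\subseteq\langle T_n(\phi,F)\rangle$ is immediate from $T_n(\phi,F)\cup E_n\subseteq\langle T_n(\phi,F)\rangle$.

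Neither step presents a real obstacle; the only mildly subtle point is recognizing that the hypothesis ``$F$ is a subgroup'' makes each $\pi(\phi^k(F))$ a subgroup, so that their product collapses to a subgroup in the abelian quotient, and it is this observation that unlocks the factorization in (2).
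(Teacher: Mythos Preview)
Your proof is correct. Part (1) is handled exactly as in the paper. For part (2), both you and the paper reduce to showing $\pi(\langle T_n(\phi,F)\rangle)=\pi(T_n(\phi,F))$ in the abelianization and then extract the factorization $\langle T_n(\phi,F)\rangle=T_n(\phi,F)\cdot E_n$ in the same way; the difference is in how that equality is obtained. You observe directly that each $\pi(\phi^k(F))$ is a subgroup of the abelian group $G/G'$, so their product $\pi(T_n(\phi,F))$ is already a subgroup and hence coincides with $\pi(\langle T_n(\phi,F)\rangle)$. The paper instead uses that $T_n(\phi,F)^{-1}=T_n^{\#}(\phi,F)=\phi^{n-1}(F)\cdots\phi(F)\cdot F$ (because $F$ is a subgroup), notes that reversing the order of factors is invisible modulo $G'$, and then checks explicitly that $\pi\big((T_n(\phi,F)\cup T_n(\phi,F)^{-1})^m\big)=\pi(T_n(\phi,F))$ for all $m$, feeding this into part (1). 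Your route is shorter and avoids the auxiliary trajectory $T_n^{\#}$; the paper's route has the minor advantage of tying the computation to the framework of Remark~4.2, but for this lemma your one-line ``product of subgroups in an abelian group is a subgroup'' is the cleaner argument.
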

\begin{proof}
(1) Since the set $X_n=T_n(\phi,F)\cup T_n(\phi,F)^{-1}$ is symmetric and contains $1,$  it generates a subgroup of the form $\bigcup_{m\in \N_+}X_n^m.$ 
When $G$ is locally finite and $F$ is finite we may use the finiteness of $\langle T_n(\phi,F)  \rangle$ and the containment $X_n^m\subseteq X_n^{m+1}$ to prove the last assertion.\\
(2) In the notation of   Remark \ref{rem:coin},
\[T_n(\phi,F)^{-1}=T^{\#}_n(\phi,F)=\phi^{n-1}(F)\cdot \ldots \cdot \phi(F)\cdot F \] as $F$ is a subgroup of $G.$ Let  $g_1,\ldots, g_n\in G$, where $n\geq 2.$ Then \[\pi(g_1\cdot g_2\cdots g_{n-1}\cdot g_n)=\pi(g_n\cdot g_{n-1}\cdots g_2\cdot g_1),\] where $\pi:G\to G/G'$ is the quotient map, since  $G/G'$ is abelian.  It follows that \[\pi(T_n(\phi,F))=\pi(T^{\#}_n(\phi,F))=\pi (T_n(\phi,F)^{-1}).\] Moreover,  as $F$ is a subgroup of $G$ and $G/G'$ is abelian we get that \[\pi(T_n(\phi,F)\cup T_n(\phi,F)^{-1})^m=\pi(F^m\cdot \phi(F^m)\ldots \cdot \phi^{n-1}(F^m))=\pi(F\cdot \phi(F)\ldots \cdot \phi^{n-1}(F))=\pi(T_n(\phi,F)),\] for every $m\in \N_+.$ 
Using also (1) we deduce that $\pi(T_n(\phi,F))=\pi(\langle T_n(\phi,F)  \rangle).$
This implies that  \[\langle T_n(\phi,F)  \rangle = T_n(\phi,F)\cdot E_n,\] where $E_n=\langle T_n(\phi,F) \rangle \cap G'.$ 
\end{proof}
\begin{prop}\label{prop:mettor}
	Let $G$ be a  torsion metabelian group, $\phi\in \End(G)$. Then,
	$h(\phi)\leq h(\phi\upharpoonright_{G'})+h(\bar\phi).$
\end{prop}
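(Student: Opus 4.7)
The plan is to fix an arbitrary $F\in\mathcal{F}(G)$ — which suffices, since the torsion solvable group $G$ is locally finite and hence $h(\phi)=\sup\{H(\phi,F):F\in\mathcal{F}(G)\}$ — and to establish $H(\phi,F)\leq h(\phi\upharpoonright_{G'})+h(\bar\phi)$. Throughout, let $\pi\colon G\to G/G'$ denote the canonical projection.

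Applying Lemma~\ref{lem:cfsub}(2) gives $\langle T_n(\phi,F)\rangle=T_n(\phi,F)\cdot E_n$ with $E_n:=\langle T_n(\phi,F)\rangle\cap G'$, and the same proof establishes $\pi(\langle T_n(\phi,F)\rangle)=\pi(T_n(\phi,F))$. Since $E_n$ is the kernel of $\pi$ restricted to $\langle T_n(\phi,F)\rangle$, we deduce $|\langle T_n(\phi,F)\rangle|=|E_n|\cdot|\pi(T_n(\phi,F))|$, and combined with the containment $T_n(\phi,F)\subseteq\langle T_n(\phi,F)\rangle$ this yields the key inequality
\[
\ell(T_n(\phi,F))\leq \ell(E_n)+\ell(T_n(\phi,F),G').
\]
Taking $n=2^k$ and dividing by $2^k$, the left-hand side decreases to $H(\phi,F)$ by Proposition~\ref{prop:subseq}, while the second summand on the right, being $\ell(T_{2^k}(\bar\phi,\pi(F)))$, decreases to $H(\bar\phi,\pi(F))\leq h(\bar\phi)$ by Proposition~\ref{prop:inquo}.

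So everything reduces to bounding $\limsup_k\ell(E_{2^k})/2^k\leq h(\phi\upharpoonright_{G'})$. My plan is to construct a finite subgroup $F_\ast$ of $G'$ (depending on $F$) with the property that $E_n\subseteq T_n(\phi\upharpoonright_{G'},F_\ast)$ for every $n\in\N_+$. Since $G'$ is abelian, each such trajectory is automatically a subgroup of $G'$, and the containment upgrades to $\ell(E_n)\leq \ell(T_n(\phi\upharpoonright_{G'},F_\ast))$. Applying Proposition~\ref{prop:subseq} to $\phi\upharpoonright_{G'}$ with the subset $F_\ast$, the resulting sequence divided by $2^k$ decreases to $H(\phi\upharpoonright_{G'},F_\ast)\leq h(\phi\upharpoonright_{G'})$, delivering the desired bound and completing the proof after combination with the estimate on the quotient.

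The hard part, and the only obstacle of substance, is the construction of $F_\ast$. Inside $\langle T_n(\phi,F)\rangle$, the subgroup $E_n$ is generated by $F\cap G'$ together with the commutators $[\phi^i(f),\phi^j(f')]=\phi^i([f,\phi^{j-i}(f')])$ (for $f,f'\in F$ and $0\leq i<j\leq n-1$) and their conjugates under $\langle T_n(\phi,F)\rangle$. The metabelian hypothesis enters in two ways: $G'$ carries a $G/G'$-module structure, so that the conjugation action on $G'$ factors through the finite image $\pi(T_n(\phi,F))$, and the commutator expansion $[xy,z]=[x,z]^y[y,z]$ collapses in the abelian $G'$ to a controllable finite-data identity. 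The delicate point — which I expect to occupy the bulk of the proof — is to arrange these generators so that, despite their apparent dependence on the ever-increasing set of distances $j-i$, they can all be written as $\phi$-translates of elements from a single finite subgroup $F_\ast\subseteq G'$; verifying this $n$-independent finiteness of $F_\ast$ is the main technical hurdle.
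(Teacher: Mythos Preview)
Your reduction is sound up to the point where you need $\limsup_k \ell(E_{2^k})/2^k\leq h(\phi\upharpoonright_{G'})$, but the mechanism you propose --- a fixed finite $F_\ast\subseteq G'$ with $E_n\subseteq T_n(\phi\upharpoonright_{G'},F_\ast)$ for every $n$ --- is a genuine gap, and I do not believe it can be filled as stated. The generators of $E_n$ include all $\phi^i([f,\phi^{k}(f')])$ with $i+k<n$, so in particular the ``seed'' elements $[f,\phi^k(f')]\in G'$ for unbounded $k$; there is no commutator identity in a general metabelian group that rewrites $[f,\phi^k(f')]$ as a product of $\phi$-translates (with exponents $\leq k$) of finitely many fixed elements of $G'$. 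Your observation that conjugation factors through $\pi(T_n(\phi,F))$ does not help either, since that image grows with $n$ rather than being a single finite group. You have correctly located the obstacle, but the proposal does not overcome it.

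The paper avoids this entirely. Instead of controlling all the $E_n$, it fixes $\varepsilon>0$, chooses (via Proposition~\ref{prop:inquo}) a level $M$ with $\ell(T_{2^M}(\phi,F),G')/2^M\leq H(\bar\phi,\pi(F))+\varepsilon$, and freezes $E:=\langle T_{2^M}(\phi,F)\rangle\cap G'$ at that single level. A short index computation using Lemma~\ref{lem:cfsub}(2) yields the equality $\ell(T_{2^M}(\phi,F),G')=\ell(T_{2^M}(\phi,F),T_{2^M}(\phi,E))$. Because $E$ lies in the abelian group $G'$, every $T_n(\phi,E)$ is a subgroup, so Lemma~\ref{lem:tnsub} applies and $n\mapsto \ell(T_{2^n}(\phi,F),T_{2^n}(\phi,E))/2^n$ is decreasing; for $n\geq M$ this term is therefore bounded by $h(\bar\phi)+\varepsilon$. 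One then combines this with
\[
\ell(T_{2^n}(\phi,F))\leq \ell(T_{2^n}(\phi,F),T_{2^n}(\phi,E))+\ell(T_{2^n}(\phi,E))
\]
(Lemma~\ref{lem:pofl}(b)) and lets the second summand tend to $H(\phi\upharpoonright_{G'},E)\leq h(\phi\upharpoonright_{G'})$. The single frozen $E$ together with the monotonicity of Lemma~\ref{lem:tnsub} plays the role your hypothetical $F_\ast$ was meant to play, and no uniform containment $E_n\subseteq T_n(\phi,F_\ast)$ is ever required.
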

\begin{proof}
	Since $G$ is a torsion metabelian  group it is locally finite by \cite[Proposition 1.1.5]{DX}. Recall that in this case
	$h(\phi)=\sup\{H(\phi, F))|\ F\in \mathcal F(G)\}.$ 
Let $D$ be a finite subgroup of $G$ and  let $C=\pi(D),$ where $\pi: G\to G/G'$ is the quotient homomorphism. Fix $\varepsilon>0.$ By Proposition \ref{prop:inquo}, there exists $M\in \N$ such that, for every $n\geq M$,
\begin{equation}\label{eq:last1}
\frac{\ell(T_{2^n}(\phi,D),G')}{2^n}\leq H(\bar{\phi}, C)+\varepsilon.
\end{equation}
 For $T=T_{2^M}(\phi,D)$ and $E=\langle T \rangle \cap G'$ we have $\langle T\rangle =TE$ and $\langle T\rangle G'=TEG'=TG',$  by Lemma  \ref{lem:cfsub}. Note that $E$ and  $S=T_{2^M}(\phi,E)$ are  finite abelian as $G$ is 
 torsion metabelian.  On the one hand,  \[\ell(T,E)=\log[\langle T\rangle:E]= \log[\langle T\rangle:\langle T \rangle \cap G']= \log [\langle T\rangle G':G']=\log [T G':G']=\ell(T,G').\]    On the other hand,  \[\ell(T,G')\leq \ell(T,S)\leq \ell(T,E) \] by Lemma \ref{lem:pofl}(a). Hence,
\begin{equation} \label{eq:last2}
\ell(T,G')=\ell(T,S).
\end{equation}
Let $n\geq M.$ By Lemma \ref{lem:tnsub} and equations (\ref{eq:last1}) and (\ref{eq:last2}),
\begin{equation} \label{eq:last3}
\frac{\ell(T_{2^n}(\phi,D),T_{2^n}(\phi,E))}{2^n}\leq \frac{\ell(T,S)}{2^M}= \frac{\ell(T,G')}{2^M}\leq H(\bar{\phi}, C)+\varepsilon\leq  h(\bar{\phi})+\varepsilon.
\end{equation}
By Proposition \ref{prop:subseq}, there exists $N\geq M,$ such that for every $n\geq N,$
\begin{equation} \label{eq:last4}\frac{\ell(T_{2^n}(\phi,E))}{2^n}\leq H(\phi\upharpoonright_{G'},E)+\varepsilon\leq h(\phi\upharpoonright_{G'})+\varepsilon,
\end{equation}
and also
\begin{equation} \label{eq:last5}
H(\phi,D)\leq\frac{\ell(T_{2^n}(\phi,D))}{2^n}.
\end{equation}
By Lemma \ref{lem:pofl}(b),
\begin{equation} \label{eq:last6}
\ell(T_{2^n}(\phi,D))\leq \ell(T_{2^n}(\phi,D) T_{2^n}(\phi,E))=\ell(T_{2^n}(\phi,D), T_{2^n}(\phi,E))+\ell(T_{2^n}(\phi,E)).
\end{equation}
It follows from (\ref{eq:last3}), (\ref{eq:last4}), (\ref{eq:last5})and (\ref{eq:last6})  that
\[ H(\phi,D)\leq\frac{\ell(T_{2^n}(\phi,D))}{2^n}\leq \frac{\ell(T_{2^n}(\phi,D), T_{2^n}(\phi,E))}{2^n}+\frac{\ell(T_{2^n}(\phi,E))}{2^n}\leq  h(\bar{\phi})+h(\phi\upharpoonright_{G'})+2\varepsilon.\]
This completes the proof as the latter holds for any finite subgroup $D$ and any $\varepsilon>0.$
\end{proof}	
\begin{remark}
 It is worth noting that Proposition \ref{prop:mettor}  is no longer true in case the metabelian group $G$ is not torsion. Indeed, consider the Lamplighter group 
 $G=\Z_2^{(\Z)}\rtimes \Z$ which was mentioned in the introduction.
Using the arguments appearing in
\cite[Example 2.7]{GBS}, one can show that for $\phi=id_G$ it holds that
$h(\phi\upharpoonright_{G'})=h(\bar\phi)=0$ while $h(\phi)=\infty.$ It follows that
\[h(\phi)> h(\phi\upharpoonright_{G'})+h(\bar\phi).\]

\end{remark}
\begin{thm}\label{thm:fortwo}
If $G$ is a   two-step nilpotent torsion group, $\phi\in\End(G)$ and $H$ is a $\phi$-invariant normal subgroup of $G,$ then $\AT(G,\phi,H)$ holds.
\end{thm}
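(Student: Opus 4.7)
The plan is to combine the three main ingredients already developed: the reduction to the derived subgroup (Proposition \ref{prop:solvable}(1)), the centrality-based lower bound (Proposition \ref{prop:cen}), and the metabelian upper bound (Proposition \ref{prop:mettor}).

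First I would set $\mathfrak X$ to be the class of two-step nilpotent torsion groups (including abelian ones as the degenerate case). I must check that $\mathfrak X$ is closed under taking subgroups and quotients so that Proposition \ref{prop:solvable}(1) applies: subgroups and quotients of torsion groups are torsion, and subgroups and quotients of groups of nilpotency class at most $2$ again have class at most $2$. Thus Proposition \ref{prop:solvable}(1) reduces the statement of the theorem to verifying $\AT(G,\phi,G')$ for every $G \in \mathfrak X$ and every $\phi \in \End(G)$.

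Next I would fix such $G$ and $\phi$, and prove the two inequalities separately. Since $G$ is two-step nilpotent, we have $G' \subseteq Z(G)$, and because $G'$ is fully characteristic (hence $\phi$-invariant) and torsion, Proposition \ref{prop:cen} applied with $N = G'$ yields
\[
h(\phi) \geq h(\phi\upharpoonright_{G'}) + h(\bar\phi).
\]
For the reverse inequality, I would observe that a two-step nilpotent group is metabelian (since $G' \subseteq Z(G)$ implies $G'$ is abelian), so $G$ is torsion metabelian; Proposition \ref{prop:mettor} then gives
\[
h(\phi) \leq h(\phi\upharpoonright_{G'}) + h(\bar\phi).
\]
Combining the two inequalities yields $\AT(G,\phi,G')$.

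Finally, applying Proposition \ref{prop:solvable}(1) to $\mathfrak X$ concludes that $\AT(G,\phi,H)$ holds for every $\phi$-invariant normal subgroup $H$ of $G$, completing the proof. The substantive work has already been carried out in the preparatory propositions, so no new obstacle is expected here; the only thing to be careful about is the verification that the reduction hypothesis of Proposition \ref{prop:solvable}(1) is satisfied, which is exactly what the combined application of Propositions \ref{prop:cen} and \ref{prop:mettor} supplies.
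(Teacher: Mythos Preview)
Your proposal is correct and follows essentially the same route as the paper: reduce via Proposition~\ref{prop:solvable}(1) to proving $\AT(G,\phi,G')$, then obtain the inequality $h(\phi)\geq h(\phi\upharpoonright_{G'})+h(\bar\phi)$ from Proposition~\ref{prop:cen} (using $G'\subseteq Z(G)$) and the reverse inequality from Proposition~\ref{prop:mettor}. Your additional remarks that $\mathfrak X$ is closed under subgroups and quotients and that $G'$ is fully characteristic are correct and make explicit what the paper leaves implicit.
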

\begin{proof}
By Proposition \ref{prop:solvable}(1), it suffices to prove that $\AT(G,\phi,G')$ holds. Taking into account that $G'\subseteq Z(G)$ and using  Proposition \ref{prop:cen} we have,
	\[
h(\phi)\geq h(\phi\upharpoonright_{G'})+h(\bar\phi).
\]
By Proposition \ref{prop:mettor},  the converse inequality also holds and we complete the proof.
\end{proof}

\vskip 0.4cm  
\section{More on locally finite groups}\label{sec:moreon}
Let $G$ be a group and $\phi\in\End(G).$ In the sequel we say that $\AT(G,\phi)$ holds if $\AT(G,\phi, H)$ holds for every
$\phi$-invariant normal subgroup $H.$  If this happens for   any $\phi\in\End(G),$ then we say that $\AT(G)$ holds.
\begin{prop}\label{prop:nowsimple}
	Let $G$ be a locally finite group and $\phi\in \End(G).$ Then the family $$\mathcal{L}(G,\phi)=\{H \ \text{is a } \ \phi\text{-invariant  normal subgroup
	  of} \ G \ \text{and} \ \AT(H,\phi\upharpoonright_{H})\ \text{holds} \}$$ contains  a maximal element (with respect to inclusion).
\end{prop}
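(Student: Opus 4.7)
The plan is a direct application of Zorn's lemma to the poset $(\mathcal{L}(G,\phi), \subseteq)$. First, the trivial subgroup $\{1\}$ clearly lies in $\mathcal{L}(G,\phi)$, so the family is nonempty. The main task is to produce an upper bound for an arbitrary chain $\{H_i\}_{i\in I}$ in $\mathcal{L}(G,\phi)$: the natural candidate is $H := \bigcup_{i\in I} H_i$, which is plainly a $\phi$-invariant normal subgroup of $G$. What must be verified is that $\AT(H, \phi\upharpoonright_H)$ holds, i.e., for every $\phi$-invariant normal subgroup $K$ of $H$ one has
\[ h(\phi\upharpoonright_H) = h(\phi\upharpoonright_K) + h(\bar\phi), \]
where $\bar\phi \in \End(H/K)$ is the induced map.

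The strategy is to pass to the intersections $K_i := K \cap H_i$, each of which is a $\phi$-invariant normal subgroup of $H_i$. Since $H_i \in \mathcal{L}(G,\phi)$, the property $\AT(H_i, \phi\upharpoonright_{H_i}, K_i)$ provides
\[ h(\phi\upharpoonright_{H_i}) = h(\phi\upharpoonright_{K_i}) + h(\overline{\phi\upharpoonright_{H_i}}), \]
where $\overline{\phi\upharpoonright_{H_i}}$ acts on $H_i/K_i$. I would then take suprema over $i$. This invokes the elementary continuity principle that whenever a group is the union of an increasing chain of $\phi$-invariant subgroups, the algebraic entropy of $\phi$ is the supremum of the entropies of the restrictions, which follows because any finite subset of a chain union lies in some member (totally ordered, hence a finite set of indices has a maximum), so all trajectories $T_n(\phi,X)$ stay in that member and $H(\phi,X) = H(\phi\upharpoonright_{H_i},X)$ there. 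Applying this to $H = \bigcup H_i$, to $K = \bigcup K_i$, and (via the natural isomorphisms $H_i/K_i \cong H_i K/K$ combined with Lemma \ref{lem:iuc}) to $H/K = \bigcup_i H_i K/K$, one obtains
\[ h(\phi\upharpoonright_H) = \sup_i h(\phi\upharpoonright_{H_i}), \quad h(\phi\upharpoonright_K) = \sup_i h(\phi\upharpoonright_{K_i}), \quad h(\bar\phi) = \sup_i h(\overline{\phi\upharpoonright_{H_i}}). \]
Since $\{h(\phi\upharpoonright_{K_i})\}_i$ and $\{h(\overline{\phi\upharpoonright_{H_i}})\}_i$ are both nondecreasing in $i$ (by Lemma \ref{lem:mon} applied along $K_i \subseteq K_{i'}$ and along the natural embedding $H_i/K_i \hookrightarrow H_{i'}/K_{i'}$ for $i \leq i'$), the supremum of the pointwise sum equals the sum of the suprema in $\R_{\geq 0}\cup\{\infty\}$. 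Adding the equalities above then yields the desired additivity, so $H \in \mathcal{L}(G,\phi)$ and Zorn's lemma produces a maximal element.

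The main obstacle is the continuity principle for directed unions applied to the quotient term $h(\bar\phi)$: one must correctly identify the chain $\{H_i K/K\}$ inside $H/K$, verify that each $H_i K/K$ is $\bar\phi$-invariant, and match its induced dynamics with that of $H_i/K_i$ via the second isomorphism theorem and Lemma \ref{lem:iuc}. The rest is bookkeeping, and notably the argument does not really use the locally finite hypothesis beyond the framing of the section; the proof goes through for arbitrary groups $G$.
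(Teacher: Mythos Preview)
Your proof is correct and follows essentially the same route as the paper: Zorn's lemma on $\mathcal{L}(G,\phi)$, with the chain upper bound $H=\bigcup_i H_i$ shown to lie in $\mathcal{L}(G,\phi)$ by intersecting an arbitrary $\phi$-invariant normal $K\unlhd H$ with each $H_i$, invoking $\AT(H_i,\phi\upharpoonright_{H_i},K\cap H_i)$, and passing to suprema via continuity of entropy under directed unions (the paper cites \cite[Proposition 5.1.10]{DG-islam} for this, you argue it directly) together with the second isomorphism theorem and Lemma~\ref{lem:iuc}. You actually supply a detail the paper leaves implicit, namely why $\sup_i(a_i+b_i)=\sup_i a_i+\sup_i b_i$ via monotonicity along the chain (Lemma~\ref{lem:mon}); and your closing remark that local finiteness is not used in the argument is correct.
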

\begin{proof}
	$\mathcal{L}(G,\phi)$ is not empty as it contains $\{1\}.$	 Let $\{H_i\}_{i\in I}$ be a chain in $\mathcal{L}$. By Zorn's Lemma, it suffices to show  that
	$H=\bigcup_{i\in I} H_i\in  \mathcal{L}(G,\phi).$ Clearly, if each $H_i$ is a  $\phi$-invariant normal subgroup  of $G$, then so is $H$. Let us see that $\AT(H,\phi\upharpoonright_{H},N)$ holds, where $N$ is a  $\phi$-invariant normal subgroup
	of $H$. Since $H$ is the direct limit of the $\phi$-invariant subgroups $\{H_i| \ i\in I\}$ it follows from \cite[Proposition 5.1.10]{DG-islam} that \[h(\phi\upharpoonright_{H})=\sup_{i\in I}h(\phi\upharpoonright_{H_i}).\]
	
Similarly, we have $h(\phi\upharpoonright_{ N})=\sup_{i\in I}h(\phi\upharpoonright_{H_i\cap N})$ and  $$h(\bar\phi_{H/N})=\sup_{i\in I}h(\bar\phi_{H_iN/ N})=\sup_{i\in I}h(\bar\phi_{H_i/(H_i\cap N)}),$$ where the last equality follows from Lemma \ref{lem:iuc}.
	By our assumption, $\AT(H,\phi\upharpoonright_{H_i}, N\cap H_i)$ holds for every $i\in I.$ Using this and the previous equalities we obtain
	\[h(\phi\upharpoonright_{H})=\sup_{i\in I}h(\phi\upharpoonright_{H_i})=\sup_{i\in I}h(\phi\upharpoonright_{H_i\cap N})+\sup_{i\in I}h(\bar\phi_{H_i/(H_i\cap N)})=h(\phi\upharpoonright_{N})+h(\bar\phi_{H/N}), \] as needed.
\end{proof}
\begin{prop}\label{prop:zero}
	Let $G$ be a locally finite group, $\phi \in \End(G)$ and  $H$ be a   $\phi$-invariant normal subgroup of $G.$ \ben \item If $h (\bar \phi)=0$, then $h(\phi) =h(\phi \upharpoonright_H);$
	\item if, in addition,  $\AT(H,\phi\upharpoonright_{H})$ holds, then $\AT(G,\phi)$ holds. \een
\end{prop}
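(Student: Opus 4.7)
The plan is to prove (1) first---which is the main technical content---and then to deduce (2) formally from (1) and the hypothesis $\AT(H,\phi\upharpoonright_H)$.

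For (1), the inequality $h(\phi)\geq h(\phi\upharpoonright_H)$ is immediate from Lemma \ref{lem:mon}(1), so the content lies in the reverse inequality. I would fix an arbitrary $F\in\mathcal F(G)$ (these suffice cofinally since $G$ is locally finite) and show $H(\phi,F)\leq h(\phi\upharpoonright_H)$. The geometric idea is a coset decomposition: as $H$ is normal in $G$, two elements of $T_n(\phi,F)$ lying in the same $H$-coset differ by an element of the finite subgroup $K_n:=\langle T_n(\phi,F)\rangle\cap H$ of $H$ (finite by local finiteness of $G$), whence
\[
\ell(T_n(\phi,F))\leq \ell(T_n(\phi,F),H)+\log|K_n|.
\]
Proposition \ref{prop:inquo} together with the assumption $h(\bar\phi)=0$ forces $\ell(T_{2^n}(\phi,F),H)/2^n\to 0$. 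The main obstacle is to control $\log|K_n|/n$ by $h(\phi\upharpoonright_H)$: I would produce a finite subgroup $F'\subseteq H$ (depending on $F$) such that $K_{2^n}$ is absorbed by a trajectory $T_{2^n}(\phi\upharpoonright_H,F')$ up to an error vanishing after division by $2^n$. A natural choice is to take $F'$ to consist of normal-closure-type conjugates of $F\cap H$ inside the finite subgroup generated by suitably many $\phi$-iterates of $F$; Lemma \ref{lem:tnsub} and Proposition \ref{prop:subseq} then give $H(\phi,F)\leq h(\phi\upharpoonright_H)+\varepsilon$ for every $\varepsilon>0$.

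For (2), assume $\AT(H,\phi\upharpoonright_H)$ and let $N$ be an arbitrary $\phi$-invariant normal subgroup of $G$; I verify $\AT(G,\phi,N)$. I would invoke (1) in three different settings. Applied to $(G,\phi,H)$ it gives $h(\phi)=h(\phi\upharpoonright_H)$. Applied to $(N,\phi\upharpoonright_N,N\cap H)$ it gives $h(\phi\upharpoonright_N)=h(\phi\upharpoonright_{N\cap H})$, since the induced endomorphism on $N/(N\cap H)\cong NH/H\leq G/H$ is a restriction of $\bar\phi$ and hence has entropy at most $h(\bar\phi)=0$ by Lemma \ref{lem:mon}. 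Applied to $(G/N,\bar\phi_{G/N},NH/N)$ it gives $h(\bar\phi_{G/N})=h(\bar\phi_{G/N}\upharpoonright_{NH/N})$, since $(G/N)/(NH/N)\cong G/NH$ is a further quotient of $G/H$ and again has vanishing entropy.

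Finally, the hypothesis $\AT(H,\phi\upharpoonright_H,H\cap N)$ gives
\[
h(\phi\upharpoonright_H)=h(\phi\upharpoonright_{H\cap N})+h(\widetilde{\phi\upharpoonright_H}),
\]
where $\widetilde{\phi\upharpoonright_H}$ is the map induced on $H/(H\cap N)$. The canonical isomorphism $H/(H\cap N)\cong HN/N$ conjugates this induced map to $\bar\phi_{G/N}\upharpoonright_{HN/N}$, so Lemma \ref{lem:iuc} equates their entropies. Chaining everything yields
\[
h(\phi)=h(\phi\upharpoonright_H)=h(\phi\upharpoonright_{H\cap N})+h(\bar\phi_{G/N}\upharpoonright_{HN/N})=h(\phi\upharpoonright_N)+h(\bar\phi_{G/N}),
\]
which is exactly $\AT(G,\phi,N)$. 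The only real technical work is in (1); part (2) is then a formal consequence, in the same spirit as the reduction carried out in Proposition \ref{prop:solvable}.
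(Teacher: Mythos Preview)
Your argument for part~(2) is correct and matches the paper's proof almost line for line: both apply part~(1) to the three pairs $(G,H)$, $(N,N\cap H)$, $(G/N,NH/N)$, invoke $\AT(H,\phi\upharpoonright_H,H\cap N)$, and use the isomorphism $H/(H\cap N)\cong HN/N$ together with Lemma~\ref{lem:iuc}.

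Part~(1), however, has a genuine gap. Your coset inequality
\[
\ell(T_n(\phi,F))\leq \ell(T_n(\phi,F),H)+\log|K_n|,\qquad K_n=\langle T_n(\phi,F)\rangle\cap H,
\]
is fine, and the first term indeed tends to~$0$ after dividing by~$2^n$. The problem is the second term. You assert that one can find a \emph{fixed} finite subgroup $F'\leq H$ such that $K_{2^n}$ is absorbed by $T_{2^n}(\phi\upharpoonright_H,F')$ up to subexponential error, but you do not construct such an $F'$, and the suggested recipe (``normal-closure-type conjugates of $F\cap H$ inside $\langle T_m(\phi,F)\rangle$'') does not do the job: $K_n$ contains words in $\phi^i(F)$ for all $i<n$ that land in $H$, e.g.\ commutators $[\phi^i(f),\phi^j(g)]$ with $i,j$ ranging freely, and these are not in general $\phi$-iterates of any fixed finite subset of $H$. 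Lemma~\ref{lem:tnsub} and Proposition~\ref{prop:subseq} only give monotonicity once you already have such an $F'$; they do not produce it.

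The missing ingredient is precisely what the paper exploits: in a locally finite group, $h(\bar\phi)=0$ is not merely a growth condition but forces the trajectory $T_m(\bar\phi,\pi(F))$ to become $\bar\phi$-\emph{invariant} for some finite $m$ (this is \cite[Proposition~4.5]{GBSp}). From this one gets $\phi^m(F)\subseteq T_m(\phi,F)\cdot H$, hence $\phi^m(F)\subseteq T_m(\phi,F)\cdot E_2$ for a single finite subgroup $E_2\leq H$, and then an easy induction yields the clean containment
\[
T_{m+k}(\phi,F)\subseteq T_m(\phi,F)\cdot T_k(\phi,E_2),
\]
from which $H(\phi,F)\leq H(\phi\upharpoonright_H,E_2)\leq h(\phi\upharpoonright_H)$ follows immediately. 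Your outline needs either this stabilization fact or some substitute for it; as written, the control of $\log|K_n|/n$ is not established.
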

\begin{proof}
(1) By Lemma \ref{lem:mon}(1),	it suffices to show that $h( \phi)\leq h(\phi\upharpoonright_H)$,  i.e., that $H(\phi, E)\leq h(\phi\upharpoonright_H)$ for an arbitrary finite 
	subgroup $E$ of $G.$ Let $\pi : G\to G/H$ be the canonical homomorphism and let $E_1=\pi(E)$. Since $\bar\phi$ has zero entropy and $G/H$ is locally finite, it follows from \cite[Proposition 4.5]{GBSp} that there exists $m\in \N_+$ such that 
	 $T_m(\bar\phi, E_1)$ is $\bar\phi$-invariant. This implies that $\phi ^{m}(E) \subseteq  T_m(\phi, E)\cdot H$. As $E$ is finite there exists a finite subset $F$ of $H$ such that $\phi ^{m}(E)\subseteq T_m(\phi, E)\cdot F$. Since $H$ is locally finite, there exists a finite subgroup $E_2$ of $H$ containing $F$, so we have
	\begin{equation}\label{eq:one}
	\phi ( T_m(\phi, E) )\subseteq T_m(\phi, E)\cdot E_2.
	\end{equation}

	We now prove that
	\begin{equation}\label{eq:indu}
	T_{m+k}(\phi, E)\subseteq T_m(\phi, E)T_k(\phi, E_2)
	\end{equation}
	using induction on $k$. For $k=1$, $T_{m+1}(\phi, E)\subseteq T_m(\phi, E)T_1(\phi, E_2)$ follows from Equation (\ref{eq:one}). Assuming that $T_{m+k}(\phi, E)\subseteq T_m(\phi, E)T_k(\phi, E_2)$, one obtains
	\begin{equation*}\begin{split}
	T_{m+k+1}(\phi, E)&=E\phi(T_{m+k}(\phi, E))\subseteq
	\\ &\subseteq E\phi(T_m(\phi, E))\phi(T_k(\phi, E_2))=
	\\ &=T_{m+1}(\phi, E)\phi(T_k(\phi, E_2))\subseteq
	\\ &\subseteq T_m(\phi, E)E_2\phi(T_k(\phi, E_2))=
	\\ &= T_m(\phi, E)T_{k+1}(\phi, E_2).
	\end{split}\end{equation*}
	Thus Equation (\ref{eq:indu}) holds, and we have
	\[
	\frac{\log|T_{m+k}(\phi, E)|}{m+k} \cdot \frac{m+k}{k}\leq \frac {\log|T_m(\phi, E)|}{k}+ \frac{\log|T_k(\phi, E_2)|}{k}.
	\]
	
	Since $m$ is fixed, letting $k\to \infty$ (so $k+m\to \infty$ as well), we deduce that
	\[
	H(\phi, E)\leq H(\phi, E_2)\leq h(\phi\upharpoonright_H). 
	\]
	(2) Let $M$ be a  $\phi$-invariant normal subgroup of $G.$ By our assumption, $\AT(H,\phi\upharpoonright_{H}, M\cap H)$ holds as $M\cap H$
	is $\phi$-invariant normal subgroup of $H.$ This means that  \begin{equation}\label{ex:split1}
	h(\phi\upharpoonright_{M\cap H})+ h(\bar\phi_{H/H\cap M})=h(\phi\upharpoonright_H),  
	\end{equation}
	where $\phi_{H/H\cap M}$ is the endomorphism induced by $\phi\upharpoonright_H.$   Since $HM/H$ is a subgroup of $G/H$ and $h (\bar \phi_{G/H})=0$ it follows from Lemma \ref{lem:mon}(1) that   $h (\bar \phi_{HM/H})=0.$ By Lemma \ref{lem:iuc}, $\bar\phi_{M/M\cap H}$ has zero entropy as this endomorphism is conjugated to $\bar\phi_{HM/ H}.$ Applying item (1) to the locally finite group $M$ and the endomorphism $\phi\upharpoonright_M$ we deduce that
	\begin{equation}\label{ex:split2}
	h(\phi\upharpoonright_M)=	h(\phi\upharpoonright_{M\cap H}).
	\end{equation}
  As $G/HM\cong (G/M)/(HM/M)$ is a quotient of $G/H$ and
$\bar\phi_{G/ H}=0,$ we deduce by Lemma \ref{lem:iuc} and Lemma \ref{lem:mon}(2) that $\tilde \phi\in  \End((G/M)/(HM/M))$ has zero entropy, 
where $\tilde \phi$ is the map induced by $\bar\phi_{G/ M}.$ By  Lemma \ref{lem:iuc}, $h(\bar\phi_{H/H\cap M})=h(\bar\phi_{HM/ M}).$
Applying item (1) to the locally finite group $G/M$ and the endomorphism $\bar\phi_{G/M}$ we obtain
	\begin{equation}\label{ex:split3}
h(\bar\phi_{G/ M})=h(\bar\phi_{HM/ M})=h(\bar\phi_{H/H\cap M}).
\end{equation}
From (1) we have \begin{equation}\label{ex:split4} h(\phi) =h(\phi \upharpoonright_H).
\end{equation}
It follows from (\ref{ex:split1}), (\ref{ex:split2}), (\ref{ex:split3}) and (\ref{ex:split4}) that $\AT(G,\phi, M)$ holds.
\end{proof}
\begin{corol}\label{cor:simple}
If $G$ is a locally finite group having a  fully characteristic  finite index simple subgroup $H$, then  $\AT(G)$ holds.
\end{corol}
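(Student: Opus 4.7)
The plan is to verify the hypotheses of Proposition \ref{prop:zero}(2) for the subgroup $H$ and every $\phi\in\End(G)$, which directly yields $\AT(G,\phi)$ for every such $\phi$, i.e.\ $\AT(G)$.

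First I would note that since $H$ is fully characteristic, it is $\phi$-invariant for every $\phi\in\End(G)$; in particular it is invariant under inner automorphisms, hence normal in $G$. So the setup of Proposition \ref{prop:zero} applies to the pair $(G,H)$ for every endomorphism $\phi$ of $G$.

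Next I would check that $h(\bar\phi_{G/H})=0$. Because $H$ has finite index, $G/H$ is a finite group, so for every finite subset $X\subseteq G/H$ one has $T_n(\bar\phi,X)\subseteq G/H$, giving a uniform bound $\ell(T_n(\bar\phi,X))\leq \log|G/H|$. Dividing by $n$ and letting $n\to\infty$ shows $H(\bar\phi,X)=0$ for every $X\in\mathcal P_{fin}(G/H)$, hence $h(\bar\phi)=0$.

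Finally I would verify that $\AT(H,\phi\upharpoonright_H)$ holds, which here is essentially trivial: since $H$ is simple, its only normal subgroups are $\{1\}$ and $H$ itself, and $\AT(H,\phi\upharpoonright_H,\{1\})$ and $\AT(H,\phi\upharpoonright_H,H)$ both reduce to the tautology $h(\phi\upharpoonright_H)=0+h(\phi\upharpoonright_H)$. With all three hypotheses verified, Proposition \ref{prop:zero}(2) yields $\AT(G,\phi)$ for every $\phi\in\End(G)$, i.e.\ $\AT(G)$. There is no real obstacle: the three conditions (fully characteristic, finite index, simple) are exactly what is needed to trigger each ingredient of Proposition \ref{prop:zero}(2), and the corollary is really a packaging statement.
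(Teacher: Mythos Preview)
Your proof is correct and follows essentially the same approach as the paper: verify that $H$ is a $\phi$-invariant normal subgroup (from being fully characteristic), that $h(\bar\phi)=0$ (from $[G:H]<\infty$), and that $\AT(H,\phi\upharpoonright_H)$ holds (from simplicity of $H$), then apply Proposition~\ref{prop:zero}(2). The only difference is that you spell out in slightly more detail why each hypothesis is satisfied.
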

\begin{proof}
Let $\phi\in \End(G).$ Since $H$ is fully characteristic 	it is $\phi$-invariant normal subgroup of $G$. Since $[G:H]< \infty$ it follows that
 $h (\bar \phi)=0.$ Clearly, $\AT(H)$ holds as $H$ is simple. By Proposition \ref{prop:zero}, $\AT(G)$ holds.
\end{proof}
\begin{example}\label{ex:final}
Let us prove that $\AT(G)$ holds, where $G=\mathcal S_{fin} (\N_{+})$ is the  finitary symmetric group  which consists of all permutations on $\N_{+}$ of finite support. Note  that $G$ is a locally finite group that is not finitely quasihamiltonian (see \cite[Example 2.1(e)]{GBS}).  It is known that   $\Alt(\N_{+}),$ the infinite group of all even permutations, is a fully characteristic simple subgroup of $G$ of index $2$.  By Corollary \ref{cor:simple},
$\AT(G)$ holds.
\end{example}

\vskip 0.1cm  
\noindent \textbf{Acknowledgments.}  It is a pleasure to thank the referee for the  useful suggestions. In particular, for simplifying the proof of Proposition \ref{prop:nowsimple} using \cite[Proposition 5.1.10]{DG-islam}.

\vskip 0.4cm 
	
\end{document}